\newtheorem{theorem}{Theorem}[section]
\newtheorem{corollary}[theorem]{Corollary}
\newtheorem{lemma}[theorem]{Lemma}
\newtheorem{proposition}[theorem]{Proposition}
\theoremstyle{definition}
\newtheorem{definition}[theorem]{Definition}
\newtheorem{example}[theorem]{Example}
\theoremstyle{parrafo}
\begin{document}

\title[]{Generalized Bernstein operators defined by increasing nodes}

\author{J. M. Aldaz and H. Render}
\address{Instituto de Ciencias Matem\'aticas (CSIC-UAM-UC3M-UCM) and Departamento de 
	Matem\'aticas,
	Universidad  Aut\'onoma de Madrid, Cantoblanco 28049, Madrid, Spain.}
\email{jesus.munarriz@uam.es}
\email{jesus.munarriz@icmat.es}
\address{H. Render: School of Mathematical Sciences, University College
	Dublin, Dublin 4, Ireland.}
\email{hermann.render@ucd.ie}

\thanks{2010 Mathematics Subject Classification: \emph{Primary: 41A10}}
\thanks{Key words and phrases: \emph{Bernstein polynomial, Bernstein operator.}}

\thanks{The first named author was partially supported by Grant MTM2015-65792-P of the
	MINECO of Spain, and also by by ICMAT Severo Ochoa project SEV-2015-0554 (MINECO)}

\begin{abstract} 
We study  certain generalizations of the classical Bernstein operators, 
defined via increasing sequences of nodes. Such operators are required to
fix 
two functions, $f_0$ and $f_1$, such that $f_0 > 0$ and $f_1/ f_0$ is increasing
on an interval $[a,b]$.
 A characterization regarding  when this can be done is presented. From it we
 obtain, under rather general circumstances, the following necessary  condition
for existence:  if nodes are non-«decreasing, then $(f_1/f_0)^\prime >0 $ on $(a,b)$, 
while if nodes are strictly increasing,  then $(f_1/f_0)^\prime >0 $ on $[a,b]$.
\end{abstract}

\maketitle

\markboth{J. M. Aldaz, H. Render}{Generalized Bernstein Operators}

\section{Introduction}

Let $\mathbb{P}_{n} = \mathbb{P}_{n}[a,b]$ denote the space of polynomials of degree
bounded by $n$, over the interval $[a,b]$. In recent years there has been a continued interest in finding generalizations or modifications of the 
classical Bernstein operators 
$B_{n}:C
\left[ a,b\right] \rightarrow \mathbb{P}_{n}[a,b]$, defined by 
\begin{equation} 
B_{n}f\left( x\right)
=
\sum_{k=0}^{n}f\left( a+\frac{k}{n}\left( b-a\right)
\right) \binom{n}{k}\frac{\left( x-a\right) ^{k}\left(
	b-x\right) ^{n-k}}{\left( b-a\right) ^{n}},  \label{defBPr}
\end{equation}
to more general spaces of functions, but still reproducing a two dimensional
subspace, say $\operatorname{Span}\{f_0, f_1\}$, with
$f_0 > 0$ and $f_1/f_0$ injective. Within the realm of
polynomial spaces, one asks when the exact reproduction of functions, other than the affine ones, is possible.
Also, similar questions have been asked about related positive
operators, (cf. for instance \cite{AcArGo}).

Sometimes fixing a subspace different from the affine functions
 is achieved by modifying the Bernstein
bases
(consider, for instance, the nowadays called King's operators,
after \cite{Ki}). 
Within the line of research followed here  (cf. \cite{MoNe00}, \cite{AKR07},
\cite{AKR08b}, \cite{AKR08}, \cite{KR07b}, \cite{AR}, \cite{Ma}, \cite{AiMa},
\cite{AlRe18})
fixing $f_0$ and   $f_1$ is achieved,
when possible, by modifying the location of the nodes $t_{n, k }$ 
(instead of having  $t_{n, k } =  a+\frac{k}{n}\left( b-a\right)$ as in (\ref{defBPr})). A motivation for this approach
is that it allows us to keep the Bernstein bases unchanged, a desirable feature given
their  several  optimality properties, cf. for instance \cite{Fa}. Multiplying by $-1$ if needed,
we may assume that $f_1/f_0$ is strictly increasing. 

The situation regarding the existence of generalized Bernstein operators,
defined by strictly increasing sequences of nodes,  is well understood in the context of
extended Chebyshev spaces, cf. \cite{AKR08b}: one considers a two
dimensional extended Chebyshev space $U_1$,  for which a generalized
Bernstein operator fixing it can always be defined with increasing
nodes (since they are the endpoints of the interval), 
and inductively, via the interlacing property of nodes (cf. \cite[Theorem 6]{AKR08b}) 
this  definition is extended to
$U_1 \subset U_2 \subset \cdots \subset U_n$, where  each $U_k$ is a
$k + 1$-dimensional extended Chebyshev space.
 
 But this framework is insufficient even for spaces of polynomials,
 since for instance, it cannot handle the case where we have
 $U_1: = \operatorname{Span}\{\mathbf{1}, x^3\}$ over $[a,b]$, with
$a < 0 < b$ (cf. Example \ref{E2}). It is thus natural to try to go beyond
chains of extended Chebyshev spaces. Now, a salient difference between
various definitions of generalized Bernstein operators appearing in the literature, 
 is whether one should
require the sequence of nodes to be strictly increasing
(as in \cite{Ma}), 
or not (as in \cite{AKR08b}, \cite{AKR08}). Of course, having 
strictly increasing nodes leads to better properties from the point of view of
shape preservation,  but existence will be obtained   in fewer cases. 

We shed light on this issue by characterizing, in terms of the
spaces $U_n$ and $D_{f_{0}}U_{n}:=\left\{ \frac{d}{dx}\left( \frac{f}{f_{0}}\right) :f\in
U_{n}\right\}$, when the sequence of nodes is non-decreasing, and
when it is strictly increasing, cf. 
 Theorem \ref{ThmBern2} below for full details.  
 This Theorem  improves on 
 \cite[Theorem1]{AKR07} (the main result of  \cite{AKR07}) and generalises 
 \cite[Theorem 3.2]{AlRe18}, which deals exclusively with the polynomial case
 $\mathbb{P}_{n}[a,b]$. From Theorem \ref{ThmBern2} 
 the following necessary condition is obtained: 
 if both spaces $U_n$ and $D_{f_{0}}U_{n}:=\left\{ \frac{d}{dx}\left( \frac{f}{f_{0}}\right) :f\in
 U_{n}\right\}$ have positive Bernstein bases (a hypothesis weaker than
 being extended Chebyshev spaces) then the existence of a generalized 
 Bernstein operator having non-decreasing nodes entails 
 that $(f_1/f_0)^\prime >0 $ on $(a,b)$, 
 while if nodes are strictly increasing,  then $(f_1/f_0)^\prime >0 $ on $[a,b]$,  cf. Corollary \ref{antimaz}.
 
  Since this result contradicts some statements
 made in \cite[Section 7.2]{Ma}, in an effort to  clarify these issues
 we have emphasized concrete examples
 and explicit computations thoroughout the paper. 
 
To sum up, the difference between the cases where $f_1^\prime$ vanishes
at some point inside $(a,b)$, and where $f_1^\prime >0$
on $(a,b)$, turns out to be very important
from the point of view of the ordering of the nodes, and hence, of shape preservation
and of the existence of generalized Bernstein operators. 

\section{Definitions and motivating examples.}

\begin{definition} Let $U_{n}$  be an $n+1$ dimensional subspace of 
	$C^n\left( \left[
	a,b\right], \mathbb{K}\right)$, where $\mathbb{K} = \mathbb{R}$ or
	$\mathbb{K} = \mathbb{C}$.  A 
	Bernstein basis $\{p_{n,k}: k=0,\dots,n\}$ of $U_n$ is a basis
	with the property that each $p_{n,k}$ has a zero of order $k$ at $a$, and
	a zero of order 
	$n-k$ at $b$. The function $p_{n,k}$ might have additional zeros inside 
	$\left(a,b\right) $; this is not excluded by the preceding definition. A Bernstein basis 
 is 
 {\em non-negative} if for all
	$k= 0, \dots, n$, $p_{n,k} \ge 0$ on  $\left[ a,b\right]$, and
	{\em positive} if $p_{n,k} > 0$ on $\left(a,b\right)$. Finally,
	a non-negative Bernstein basis is {\em normalized} if $\sum_{k=0}^n p_{n,k} \equiv 1$.
\end{definition} 

It is easy to check that non-negative Bernstein bases are unique
up to multiplication by a positive scalar, and that normalized
Bernstein bases are unique.

\begin{definition} If $U_{n}$ has a non-negative Bernstein basis $\{p_{n,k}: k=0,\dots,n\}$, we define a
	{\em generalized Bernstein operator} $B_{n}:C\left[ a,b\right] \rightarrow U_{n}$
	by setting 
	\begin{equation}
	B_{n}\left( f\right) =\sum_{k=0}^{n}f\left( t_{n,k}\right) \alpha
	_{n,k}p_{n,k},  \label{eqBern}
	\end{equation}
	where the nodes $t_{n,0},...,t_{n,n}$ belong to the interval 
	$\left[ a,b\right]$, and the weights $\alpha_{n,0},...,\alpha_{n,n}$ are positive. 
\end{definition} 

Non-negativity of the functions $p_{n,k}$ and positivity of the weights $\alpha_{n,0},...,\alpha_{n,n}$ are required
so that the resulting operator is  positive, a natural property from the viewpoint of
shape preservation. Strict positivity
of the weights entails that all the basis functions are used in
the definition of the operator, something useful if we want families of
operators to converge to the identity. Finally,
 the nodes must belong to $\left[ a,b\right]$; otherwise, the operator will not be well defined on
$C\left[ a,b\right]$. But no requirement is made  about the
ordering of the nodes, and in particular, we do not ask that they  be {\em strictly  increasing},
i.e., 
that  $t_{n,0} < t_{n,1} < \cdots < t_{n,n}$. When we only have
$t_{n,0} \le t_{n,1} \le \cdots \le t_{n,n}$ we say that the sequence of nodes is
{\em increasing}, or equivalently, {\em non-decreasing}.

\vskip .2 cm

The problem of
existence, as studied in  \cite{AKR08b} and \cite{AKR08}, arises when we choose two functions $f_{0},f_{1}\in U_{n}$, such that $f_{0} >0 $,  
$f_{1}/f_{0}$ is strictly increasing, and we  require that 
\begin{equation}
B_{n}\left( f_{0}\right) =f_{0}\text{ and }B_{n}\left( f_{1}\right) =f_{1}.
\label{eqBern2}
\end{equation}
If these equalities can be satisfied, they uniquely determine the location of the
nodes and the values of the coefficients, cf. \cite[Lemma 5]{AKR08b}; in other words, there is at most
one Bernstein operator $B_{n}$ of the form (\ref{eqBern}) satisfying (\ref
{eqBern2}).  
We will consistently use the following notation. Assume that $p_{n,k},$ 
$k=0,...,n$, is a Bernstein basis of the space $U_{n}$. Given 
$f_{0},f_{1}\in U_{n}$, there exist coefficients $\beta_{n,0},...,\beta_{n,n}$
and $\gamma_{n,0},...,\gamma_{n,n}$ such that 
\begin{equation}
f_{0}\left( x\right) =\sum_{k=0}^{n}\beta_{n,k}p_{n,k}\left( x\right) \text{
and }f_{1}\left( x\right) =\sum_{k=0}^{n}\gamma_{n,k}p_{n,k}\left( x\right) .
\label{eqeq}
\end{equation}

The following elementary fact regarding bases will be used throughout
(cf. \cite[Lemma5]{AKR08b}):
If there exists a generalized Bernstein operator $B_n$ of the form
 (\ref{eqBern}), fixing $f_0$ and $f_1$,
then it must be the case that for each $k= 0, \dots, n$, 
\begin{equation}
\beta_{n,k} = f_0 (t_{n,k}) \ \alpha_{n,k} \text{ \ \ \ 
	and \ \ \ } \gamma_{n,k}  = f_1 (t_{n,k}) \  \alpha_{n,k}.
\label{bases}
\end{equation} 
Note that since  by hypothesis $f_0 > 0$ and  $\alpha_{n,k} > 0$,
if $B_n$ exists then  $\beta_{n,k} > 0$.
Now, using the injectivity of $f_1/f_0$,
the
nodes are uniquely determined by
\begin{equation}
t_{n,k}:=\left( \frac{f_{1}}{f_{0}}\right) ^{-1}\left( \frac{\gamma_{n,k}}{%
	\beta_{n,k}}\right),  \label{nodes}
\end{equation}
and the weights, by
\begin{equation}
\alpha_{n,k}:=\frac{\beta_{n,k}}{f_{0}(t_{n,k})}. \label{coeff}
\end{equation}
Finally, when the Bernstein basis is non-negative and normalized,
and $f_0 = \mathbf{1}$, we have that
\begin{equation} \label{one}
1 = \alpha_{n,k} = \beta_{n,k}
 \text{ \ \ \ 
	and \ \ \ } t_{n,k}= f_{1}^{-1}\left(\gamma_{n,k}\right).
\end{equation}

Suppose that instead of fixing $ \mathbf{1} $ and $x$ over $[a,b]$, we want a
generalized Bernstein operator that reproduces $f_0 =  \mathbf{1} $ and some strictly
increasing function other than $x$. Possibly the simplest choice is
to fix $f_1 (x) = x^3$, since $x^2$ is not increasing over arbitrary intervals.
Already in this case we observe a wide range of behavior, depending
on the values of $a$ and $b$. 

Recall that  $\mathbb{P}_n [a,b]$ denotes the space of polynomials 
on $[a,b]$, of degree bounded
by $n$. In this case the  Bernstein bases are given by
$p_{n,k}  (x) = \binom{n}{k}
\frac{(x - a)^{k}(b-x)^{n-k}}{(b - a)^n}$.

\begin{example}\label{E1} Consider  $\mathbb{E}_1 = 
\operatorname{Span}\{\mathbf{1},x^3\}$ over
$[-1,1]$. Then $\{p_{1,0}(x) := (1-x^3)/2, p_{1,1}(x) := (1 + x^3)/2\}$ is the unique normalized
Bernstein basis for $\mathbb{E}_1$.
Define, as in \cite[Formula (21)]{AKR08b}, 
\begin{equation} \label{dim1}
B_{1}f:= f\left(  a\right)  p_{1,0}+f\left(  b\right) p_{1,1}.
\end{equation}
Then it is clear that  $B_1 \mathbf{1} =  p_{1,0}+p_{1,1} = \mathbf{1}$ and 
$B_1 f_1 = - p_{1,0}+p_{1,1} =x^3$. Note that 
 $\operatorname{Span}\{\mathbf{1},x^3\}$ is not
an extended Chebyshev space, a notion defined  next.
\end{example}

\begin{definition} \label{ECS} An \emph{extended Chebyshev
space}  $U_{n}$ of dimension $n+1$ over the interval $\left[ a,b\right] $ 
 is an $n+1$ dimensional subspace of $C^{n}\left( \left[
a,b\right] \right) $ such that each $f\in U_{n}$ has at most $n$ zeros in 
$\left[ a,b\right] $, counting multiplicities, unless $f$ vanishes
identically. 
\end{definition}

Extended Chebyshev
spaces  of dimension $n+1$  generalize
the space of polynomials of degree at most $n$ by retaining the bound on the
number of zeros.
It is well known that extended Chebyshev spaces always have
positive Bernstein bases.

\begin{example}\label{E2} Let $\mathbb{E}_2 =\operatorname{Span}\{\mathbf{1},x, x^3\}$ over
$[a,b] = [-1,1]$. In this case it is impossible to obtain a non-negative Bernstein basis
for $\mathbb{E}_2 $, whence the corresponding generalized Bernstein operator
cannot be defined. To see why this is true,  note that if $p_{2,1} (x)  = a + b x + c x^3$
has one zero at $-1$ and another at $1$, then 
$a - b -c = a + b + c = 0$, so $a = 0$ and $b = -c$, with $b \ne 0$. But for any such
$b$, $p_{2,1} (x)  = b (x - x^3)$ crosses the $y$-axis at $0$. 
Note that Bernstein bases do exist for $\mathbb{E}_2$:
One such
 basis  is given by
$\{p_{2,0} (x) = 2 - 3 x +   x^3,  p_{2,1} (x) =  x -   x^3,  p_{2,2} (x) = 2  + 3 x -  x^3\}$.

Let us now consider $\mathbb{E}_2  = \operatorname{Span}\{\mathbf{1},x, x^3\}$ over
	$[-1,2]$. In this case it is impossible to obtain a Bernstein basis
	for $\mathbb{E}_2 $, even allowing for changes of signs. Suppose
	there is such a basis, and let us try to compute $p_{2,2}$. Note that the
	coefficient of $x^3$ cannot be zero, since $ p_{2,2} (x) $ has degree at least
	two (hence three);  dividing by the said coefficient, we may assume that
	$p_{2,2} (x) = a  + b x  +  x^3 = ( x + 1)^2 ( x + c) = c + (1 + 2c) x + (2 + c) x^2 + x^3$.
Equating coefficients we see that $c = - 2$. Hence $p_{2,2} ( 2)  = 0$, which is a contradiction.
\end{example}

\begin{example}\label{P3} Next we consider 
	$\mathbb{P}_3[a,b] =\operatorname{Span}\{\mathbf{1},x, x^2, x^3\}$,  with the standard
	Bernstein bases over 
	$[a, b] = [-1,1]$ and over 	$[a, b] = [-1,2]$. In 
	the first case a generalized Bernstein operator fixing 
	$f_0 = \mathbf{1}$ and 
	$ f_1 (x)  = x^3$ exists, but the
	sequence of nodes fails to be increasing. In fact, this must be
	the case, by Corollary \ref{antimaz} below, since 
	$ f_1^\prime (0) = 0$. 
	More explicitly, solving for
	the coefficients  $\gamma_{3,k}$ of $x^3$ we have
	$\gamma_{3,0} = -1$, $\gamma_{3,1}= 1$, $\gamma_{3,2} = -1$, and 
	$\gamma_{3,3} = 1$, so in this particular instance the coordinates and the nodes take the
	same values, oscillating between $-1$ and $1$. Note that 
	$B_3$ is just the projection from $C\left[ a,b\right]$ onto 
	$\operatorname{Span}\{\mathbf{1}, x^3\}$. This can be seen by observing
	that for $k \ge 0$, $B_3 x^{2 k} = 1$ and  $B_3 x^{2 k + 1} = x^3$.
	Alternatively, given $f \in C\left[ a,b\right]$, if we simplify the
	expression for $B_3 f(x)$,  we find that 
	$B_3 f(x) \in \operatorname{Span}\{\mathbf{1}, x^3\}$.

	 When  $[a,b] =[-1,2]$, a generalized Bernstein operator fixing
	$\mathbf{1}$ and $x^3$ does not exist on $\mathbb{P}_3 [a,b]$, since solving for
	the coefficients  $\gamma_{3,k}$ of $x^3$ we find that
	$\gamma_{3,0} = -1$, $\gamma_{3,1}= 2$, $\gamma_{3,2} = -4$, $\gamma_{3,3} = 8$,
	so the  node    
	$t_{3,2} = (-4)^{1/3}$ falls outside $[-1,2]$.

	However, 
	a generalized Bernstein operator fixing
	$\mathbf{1}$ and $x^3$ does  exist on $\mathbb{P}_4 [-1,2]$, for now the coefficients  $\gamma_{4,k}$ of $x^3$ are
	$\gamma_{4,0} = -1$, $\gamma_{4,1}= 5/4 $, 
	$\gamma_{4,2} = -1$, $\gamma_{4,3} = - 1$
	and $\gamma_{4,4}= 8$, so all the nodes fall inside $[-1 , 2]$.

	Thus, not only the cases where $f_1$ is strictly increasing and where 
	$f_1^\prime > 0$ on $(a,b)$ are different, but also the (relative) location of the possible
	zeros of $f_1^\prime$ is relevant; a more extreme instance of this phenomenon 
	can be found in \cite[Theorem 5.2]{AlRe18} 
	\end{example}

Next we present some counterexamples. The results are analogous to the
instances  seen so far, but spaces and bases are chosen to specifically address
some claims made in \cite[pages 121-122 ]{Ma}, where it is stated ``Our purpose here is not to develop a comprehensive theory on Bernstein-type operators, but to convince the 
reader  via a few relevant examples, that there do exist similar operators in more general situations."  While  Bernstein-type operators can be defined in more general situations,
they do not exist in some of the relevant examples presented there.
The following spaces are considered in \cite[page 123]{Ma}: 
Let $a < 0 < b$, and 
let $n\ge 4$. Consider the
sequence 
$
\mathbb{E}_1\subset \mathbb{E}_2\subset \mathbb{P}_{3} \subset \cdots\subset \mathbb{P}_{n-1}\subset \mathbb{E}_n,
$
 where $\mathbb{E}_1 := \operatorname{Span}\{ \mathbf{1}, x^3\}$, $\mathbb{E}_2 := 
\operatorname{Span}\{  \mathbf{1}, x, x^3\}$, and for $n \ge 4$,  $\mathbb{E}_n := 
\operatorname{Span}\{  \mathbf{1}, x,  \dots, x^{n-1},  x^{n+ 2}\}$. 
The domain of definition of these functions is taken to be the interval $[a,b]$. 
In \cite[Definitions 3.1 and 3.2]{Ma} ``Bernstein-like operators" are required
to have strictly increasing sequences of nodes.

Now in  \cite[Example 7.1]{Ma},  the
existence of ``Bernstein-like operators" 
$B_{n}:C\left[ a,b\right] \rightarrow \mathbb{E}_n$
 for  $a < 0 < b$,  fixing
 $\mathbf{1}$ and $x^3$, is asserted. We show here, by explicit computation, that 
when $n= 4$ and $[a,b]= [-1,2]$, a generalized Bernstein operator
fixing $\mathbf{1}$ and $x^3$ 
does not exist. 
When $[a,b]= [-1,1]$, such an   operator exists, but  one must give up
the condition of increasing nodes.

\begin{example} \label{ex1} First we take $[a,b] =[-1,1]$. In this case a generalized Bernstein operator $B_{4}:C\left[ a,b\right] \rightarrow \mathbb{E}_4$ fixing
$ \mathbf{1}$ and $x^3$ does exist, but it is not defined
via an increasing sequence of nodes.
The normalized Bernstein basis on $\mathbb{E}_4$ can simply be found
by writing  arbitrary linear combinations of the functions
$\{  \mathbf{1}, x, x^2, x^3, x^6\}$, and imposing the conditions of having precisely 4 zeros,
 $k$  of them at $-1$ and the other
$4-k$ at $1$. Multiplying by $-1$ if needed, these basis functions can
be assumed to be non-negative at 0 (in fact, we shall see that they  are positive
inside $(-1, 1)$).  Imposing the additional condition that they
add up to 1, we find the
unique normalized Bernstein basis $\{p_{4,0}, \dots,p_{4,4}\}$
on $\left[ -1,1\right] $, where
\begin{eqnarray*}
p_{4,0}\left( x\right) &=&\frac{5}{56} - \frac{9}{28} x + \frac{45}{112}  x^2  - \frac{5}{28} 
    x^3  + \frac{1}{112} x^6, \\
p_{4,1}\left( x\right) &=&\frac{2}{7} - \frac{3}{7} x - \frac{3}{14}  x^2  + \frac{3}{7} 
    x^3  - \frac{1}{14} x^6, \\
    p_{4,2}\left( x\right) &=&\frac{1}{4} -  \frac{3}{8}  x^2  + \frac{1}{8} x^6, \\
   p_{4,3}\left( x\right) &=&\frac{2}{7} + \frac{3}{7} x - \frac{3}{14}  x^2  - \frac{3}{7} 
    x^3  - \frac{1}{14} x^6,\\
p_{4,4}\left( x\right) &=&\frac{5}{56} + \frac{9}{28} x + \frac{45}{112}  x^2  + \frac{5}{28} 
    x^3  + \frac{1}{112} x^6.
\end{eqnarray*}
These functions are positive at zero, add up to 1, and have the 
correct number of
 zeros at the endpoints. To see
that they form a positive basis, since $p_{4,k}(0) > 0$ for
$k=0,\dots, 4$, 
it suffices to show that they have no additional zeros inside
$(-1,1)$. But this is easily checked, for we already know
the location of four zeros of each $p_{4,k}$. Using the division
algorithm, we factor all the corresponding linear terms $(x +1)$ and
$(x - 1)$, and are left
in each case with a second degree polynomial having no real roots. 

Once we have found the normalized Bernstein bases, we use the condition
(\ref{eqBern2}) to determine nodes:
 Equating coefficients in 
$
 x^3 =\sum_{k=0}^{4}\gamma_{4,k}p_{4,k}\left( x\right)
 $
and solving for $\gamma_{4,k}$ we find that
$\gamma_{4,0} = -1$, $\gamma_{4,1}= 3/4$, $\gamma_{4,2} = 0$, $\gamma_{4,3} = -3/4$
and $\gamma_{4,4}= 1$.
Since the nodes are the cube roots of these coordinates, it follows
that $t_{4,0} < 
t_{4,3} < t_{4,2} < t_{4,1} < t_{4,4}$, and we see that the nodes do not form an increasing sequence in $k$.
\end{example}

\begin{example} \label{ex2} Let us now take $[a,b] =[-1,2]$. In this case a generalized Bernstein operator  $B_{4}:C\left[ a,b\right] \rightarrow \mathbb{E}_4$ fixing
$\mathbf{1}$ and $x^3$, cannot be defined.
Using the same steps as in the preceding example, we find  the following
 Bernstein basis functions:
\begin{eqnarray*}
p_{4,0}\left( x\right) &=& \frac{640}{2673} - \frac{128}{297} x + \frac{80}{297}  x^2  - \frac{160}{2673} 
    x^3  + \frac{1}{2673} x^6, 
    \\
p_{4,1}\left( x\right) &=& \frac{5776}{13365} - \frac{152}{1485} x - \frac{532}{1485}  x^2  + \frac{2318}{13365} 
    x^3  - \frac{38}{13365} x^6, 
     \\
    p_{4,2}\left( x\right) &=& \frac{98}{405} + \frac{14}{45} x - \frac{7}{90}  x^2  - \frac{56}{405} 
    x^3  + \frac{7}{810} x^6,
 \\
   p_{4,3}\left( x\right) &=& \frac{16}{243} + \frac{4}{27} x + \frac{2}{27}  x^2  - \frac{4}{243} 
   x^3  - \frac{2}{243} x^6,
   \\
p_{4,4}\left( x \right) &=& \frac{5}{243} + \frac{2}{27} x + \frac{5}{54}  x^2  +  
\frac{10}{243} x^3  + \frac{1}{486} x^6.
\end{eqnarray*}
Positivity of these functions on $(-1,2)$ is obtained by noticing, first, that
$p_{4,k} (0) = 1$, and second, that after factoring the  linear terms $(x +1)$ and
$(x - 2)$, 
in each case we are left  with a second degree polynomial having no real roots. 
Adding up we see that the basis is normalized,
so it is enough to compute the coordinates $\gamma_{4, k}$ of $x^3$. Doing so,
we find that $\gamma_{4, 2} = - 16/7 < -1$, 
so the node $t_{4, 2} \notin [-1, 2]$. 
\end{example}

\section{Characterizing when nodes  increase for general spaces.}

The following technical results, 
used to prove
Theorem \ref{ThmBern2}., come from \cite{AKR08}.     Proposition \ref{PropABL} appears in 
 \cite[Proposition 3]{AKR08}, while Lemma \ref{LemA} is a less
 general version of  \cite[Lemma 6]{AKR08}. 
 
\begin{proposition}
\label{PropABL} Assume that $U_{n}\subset C^n([a,b], \mathbb{K})$ has a Bernstein basis $p_{n,k},k=0,...,n$.
Let $f_{0}\in U_{n}$ be strictly positive and suppose that $D_{f_{0}}U_{n}:=\left\{ \frac{d}{dx}\left( \frac{f}{f_{0}}\right) :f\in
U_{n}\right\}$ 
has a Bernstein basis $q_{n-1,k}$, $k=0,...,n-1$. Set $c_0 := 0$, $q_{n-1, -
1} := 0$, $d_n:= 0$, and $q_{n-1, n} := 0$. For $k=1,...,n$, define the
non-zero numbers
\begin{equation}
c_{k}:= \frac{1}{f_{0}\left( a\right) }\lim_{x\downarrow a}\frac{\frac{d}{dx}%
p_{n,k}\left( x\right) }{q_{n-1,k-1}\left(x\right)} = \frac{1}{f_{0}\left(
a\right) }\frac{p_{n,k}^{\left( k\right) }\left( a\right) }{%
q_{n-1,k-1}^{\left( k-1\right) }\left( a\right) }  \label{eqPR1}
\end{equation}
and for $k=0,...,n-1$, the non-zero numbers
\begin{equation}
d_{k}:=\frac{1}{f_{0}\left( b\right) }\lim_{x\uparrow b}\frac{\frac{d}{dx}%
p_{n,k}\left( x\right) }{q_{n-1,k}\left(x\right)} = \frac{1}{%
f_{0}\left(b\right) }\frac{p_{n,k}^{\left( n-k\right) }\left( b\right) }{%
q_{n-1,k}^{\left( n-1-k\right) }\left( b\right) }.  \label{eqPR}
\end{equation}
Then for every $k=0,...,n$,
\begin{equation}
\frac{d}{dx}\frac{p_{n,k}\left( x\right) }{f_{0}\left( x\right) }
=c_{k}q_{n-1,k-1}\left( x\right) +d_{k}q_{n-1,k}\left( x\right).
\label{eqPREC}
\end{equation}
\end{proposition}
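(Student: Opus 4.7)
The plan is to expand $\frac{d}{dx}\left(\frac{p_{n,k}}{f_{0}}\right)$ in the Bernstein basis $\{q_{n-1,j}\}_{j=0}^{n-1}$ of $D_{f_{0}}U_{n}$, and then use the orders of vanishing at the endpoints to show that at most the two coefficients corresponding to $j=k-1$ and $j=k$ can be nonzero. These two surviving coefficients are then identified with $c_{k}$ and $d_{k}$ by means of one-sided limits at $a$ and $b$ respectively.

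For the zero-counting step, since $f_{0}>0$ on $[a,b]$, the function $p_{n,k}/f_{0}$ vanishes to order exactly $k$ at $a$ and exactly $n-k$ at $b$; hence its derivative vanishes to order $k-1$ at $a$ and order $n-k-1$ at $b$ (with the proviso that an ``order $-1$'' zero, arising when $k=0$ or $k=n$, simply signals no vanishing requirement). Writing $\frac{d}{dx}(p_{n,k}/f_{0})=\sum_{j=0}^{n-1}\lambda_{j}q_{n-1,j}$, and using that $q_{n-1,j}$ has a zero of exact order $j$ at $a$ and of order $n-1-j$ at $b$, the compatibility of zero orders at both endpoints forces $\lambda_{j}=0$ whenever $j<k-1$ or $j>k$. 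The boundary indices $k=0$ and $k=n$ are absorbed into this uniform scheme by the conventions $q_{n-1,-1}=q_{n-1,n}=0$ together with $c_{0}=d_{n}=0$, so that the representation (\ref{eqPREC}) comes out with the right shape in every case.

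To identify the surviving coefficient $\lambda_{k-1}$ when $k\geq 1$, divide both sides of the expansion by $q_{n-1,k-1}(x)$ and let $x\downarrow a$; since $q_{n-1,k}$ vanishes to strictly higher order than $q_{n-1,k-1}$ at $a$, the $\lambda_{k}$--term drops out and
\[
\lambda_{k-1}=\lim_{x\downarrow a}\frac{\frac{d}{dx}(p_{n,k}/f_{0})(x)}{q_{n-1,k-1}(x)}.
\]
Next, expand via the quotient rule, $\frac{d}{dx}(p_{n,k}/f_{0})=\frac{p_{n,k}'}{f_{0}}-\frac{p_{n,k}f_{0}'}{f_{0}^{2}}$; the second summand vanishes to order $k$ at $a$ while the first only to order $k-1$, so the first term dominates and, by continuity of $f_{0}$ at $a$, the above limit equals $(f_{0}(a))^{-1}\lim_{x\downarrow a}p_{n,k}'(x)/q_{n-1,k-1}(x)$, which is exactly $c_{k}$. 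The second equality in (\ref{eqPR1}) then follows by Taylor-expanding $p_{n,k}'$ and $q_{n-1,k-1}$ around $a$ to leading order $(x-a)^{k-1}$, so that the common factor of $1/(k-1)!$ cancels. The coefficient $\lambda_{k}=d_{k}$ and formula (\ref{eqPR}) are obtained symmetrically by letting $x\uparrow b$.

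The main obstacle is the careful bookkeeping of vanishing orders at the endpoints and the uniform handling of the extreme cases $k=0$ and $k=n$, where some of the ``basis'' functions appearing in the formula are zero by convention; once this is correctly set up, the rest is a routine application of uniqueness of Bernstein-basis expansions together with elementary L'H\^opital--style limit computations.
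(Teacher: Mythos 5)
Your proof is correct. The paper does not actually prove Proposition \ref{PropABL} --- it imports it verbatim from \cite[Proposition 3]{AKR08} --- so there is no in-paper argument to compare against; but your route (expanding $\frac{d}{dx}\left(p_{n,k}/f_{0}\right)$ in the basis $\{q_{n-1,j}\}$, annihilating all coefficients except $j=k-1,k$ by matching the \emph{exact} orders of vanishing at $a$ and $b$, and identifying the two survivors via one-sided limits and the quotient rule) is the standard argument, handles the boundary cases $k=0$ and $k=n$ correctly through the stated conventions, and also yields the non-vanishing of $c_{k}$ and $d_{k}$ since both $p_{n,k}^{(k)}(a)$ and $q_{n-1,k-1}^{(k-1)}(a)$ (resp.\ the corresponding derivatives at $b$) are nonzero by exactness of the zero orders.
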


\begin{lemma}
\label{LemA} Let $p_{n,k}$, $k=0,...,n$, be a  non-negative Bernstein
basis of $U_{n}\subset C^n([a,b], \mathbb{K})$. Then there exists a $\delta >0$ such that 
$
p_{n,k}^{\prime }\left( x\right) <0$ for all $x\in \left[ b-\delta ,b\right]
$ and all $k=0,...,n-1$, while $p_{n,k}^{\prime }\left( x\right) > 0$ for
all $x\in \left[a, a + \delta \right]$ and all $k=1,...,n$. Thus, the
numbers $c_{k}$ defined in (\ref{eqPR1}) for $k = 1, \dots, n$ are positive,
and the numbers $d_{k}$ defined in (\ref{eqPR}) for $k = 0, \dots, n-1$ are
negative.
\end{lemma}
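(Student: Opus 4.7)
The approach I would take is to extract the signs of the endpoint derivatives of each basis function from the non-negativity hypothesis combined with the order-of-vanishing conditions. Specifically, since $p_{n,k}$ has a zero of exact order $k$ at $a$, Taylor's theorem gives
\[
p_{n,k}(x) = \frac{p_{n,k}^{(k)}(a)}{k!}(x-a)^k + o\bigl((x-a)^k\bigr) \quad \text{as } x \downarrow a,
\]
with $p_{n,k}^{(k)}(a) \neq 0$. Since $(x-a)^k > 0$ for $x > a$ and $p_{n,k}(x) \geq 0$, the non-vanishing leading coefficient is forced to be positive. The analogous expansion at $b$ must account for the sign of $(x-b)^{n-k}$ when $x < b$, namely $(-1)^{n-k}$, and forces $p_{n,k}^{(n-k)}(b)$ to have sign $(-1)^{n-k}$.

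With these two endpoint sign determinations in hand, the behavior of $p_{n,k}'$ near the endpoints will follow by differentiating the Taylor expansion term by term. Near $a$, the leading term of $p_{n,k}'$ is a positive multiple of $(x-a)^{k-1}$, hence strictly positive on a right neighborhood of $a$ whenever $k \geq 1$. Near $b$, combining the sign $(-1)^{n-k}$ inherited from the Taylor coefficient with the sign $(-1)^{n-k-1}$ of $(x-b)^{n-k-1}$ for $x < b$ produces an unambiguously negative sign when $k \leq n-1$. Since only finitely many basis functions are involved, taking the minimum over $k$ of the corresponding local radii furnishes a single $\delta > 0$ valid for every index.

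For the assertions about $c_k$ and $d_k$, I would apply exactly the same Taylor argument to the non-negative Bernstein basis $q_{n-1,j}$ of $D_{f_0}U_n$, obtaining $q_{n-1,k-1}^{(k-1)}(a) > 0$ and $q_{n-1,k}^{(n-1-k)}(b)$ of sign $(-1)^{n-1-k}$. Substituting these, together with the signs of $p_{n,k}^{(k)}(a)$ and $p_{n,k}^{(n-k)}(b)$ already established, into the closed forms on the right-hand sides of (\ref{eqPR1}) and (\ref{eqPR}), and using $f_0(a), f_0(b) > 0$, yields $c_k > 0$ at once. For $d_k$ the quotient of signs $(-1)^{n-k}/(-1)^{n-1-k} = -1$ delivers the required negativity.

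No step here presents a serious obstacle; the only item requiring care is the parity bookkeeping at $b$, where the sign of the Taylor coefficient and the sign of the power of $(x-b)$ conspire differently depending on whether one is examining $p_{n,k}$, its derivative, or the corresponding basis element of $D_{f_0}U_n$. Everything else is a direct consequence of Taylor's theorem applied to functions of prescribed vanishing order and prescribed sign near the endpoints.
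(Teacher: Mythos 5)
Your proof is correct and takes essentially the same route as the source: the paper gives no proof of Lemma \ref{LemA}, deferring instead to \cite[Lemma 6]{AKR08}, whose argument is exactly this endpoint Taylor-expansion sign analysis (non-negativity fixes the sign of the first non-vanishing derivative at each endpoint, and the signs of $c_k$ and $d_k$ then follow from the closed forms in (\ref{eqPR1}) and (\ref{eqPR})). The one phrase to tighten is ``differentiating the Taylor expansion term by term'': rather than differentiating a Peano-remainder expansion, apply Taylor's theorem directly to $p_{n,k}'$, which has a zero of exact order $k-1$ at $a$ with leading coefficient $p_{n,k}^{(k)}(a)$ --- the conclusion is identical.
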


\begin{theorem}
\label{ThmBern2} Assume that both $U_{n}\subset C^n([a,b], \mathbb{K})$ and 
$D_{f_{0}}U_{n}:=\left\{ \frac{d}{dx}\left( \frac{f}{f_{0}}\right) :f\in
U_{n}\right\}$ possess 
non-negative Bernstein basis $p_{n,k}$,  for $k=0, \dots , n$,   and $q_{n-1,k}$,
for $k=0, \dots , n-1$, respectively. Suppose $f_{0},f_{1}\in U_{n}$ are such that $f_{0}>0$,
its  coordinates $\beta_{n, k}$  satisfy
$\beta_{n, k} > 0$, and
$f_{1}/f_{0}$ is strictly increasing on $\left[  a,b\right]  $. Then the
following statements are equivalent:

a) There exists a generalized Bernstein operator $B_{n}:C\left[ a,b\right]
\rightarrow  U_n$ defined by a sequence of non-decreasing
(resp. strictly  increasing)  nodes,  and
 fixing both $f_{0}$ and $f_{1}.$

b) For $k=0, \dots , n$,   the numbers $\frac{\gamma_{n,k}}{\beta_{n, k}}$ are non-decreasing
(resp. strictly increasing).

c) The coefficients $w_{k}$, defined by
\begin{equation}
\frac{d}{dx}\frac{f_{1}}{f_{0}}=\sum_{k=0}^{n-1}w_{k}q_{n-1,k}
 \label{eqA}
\end{equation}
for $k=0, \dots , n-1$,
are non-negative (resp. strictly positive).

\end{theorem}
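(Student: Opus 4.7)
The plan is to prove the equivalences via the loop (a) $\Leftrightarrow$ (b) $\Leftrightarrow$ (c), using the explicit formulas (\ref{nodes}) and (\ref{coeff}) for one direction and Proposition \ref{PropABL} for the other.

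For (a) $\Leftrightarrow$ (b), I would start from (\ref{nodes}), which says $t_{n,k} = (f_1/f_0)^{-1}(\gamma_{n,k}/\beta_{n,k})$. Since $f_1/f_0$ is strictly increasing on $[a,b]$, the ordering of the $t_{n,k}$ matches the ordering of the ratios $\gamma_{n,k}/\beta_{n,k}$. The main point I need to check is that when these ratios are monotone non-decreasing, the nodes automatically lie in $[a,b]$, so the operator is well defined. This follows because the non-negative Bernstein basis satisfies $p_{n,k}(a) = 0$ for $k \ge 1$ and $p_{n,k}(b) = 0$ for $k \le n-1$, which yields $(f_1/f_0)(a) = \gamma_{n,0}/\beta_{n,0}$ and $(f_1/f_0)(b) = \gamma_{n,n}/\beta_{n,n}$; the monotonicity of the ratios then forces all intermediate values into the range of $f_1/f_0$. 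Positivity of the weights $\alpha_{n,k} = \beta_{n,k}/f_0(t_{n,k})$ is automatic since $\beta_{n,k}>0$ by hypothesis and $f_0>0$.

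For (b) $\Leftrightarrow$ (c), I would apply Proposition \ref{PropABL} termwise to the identity $f_j/f_0 = \sum_k (\gamma$ or $\beta)_{n,k}\, p_{n,k}/f_0$, so that after a reindexing telescoped across the two sums in (\ref{eqPREC}) I obtain
\begin{equation*}
\frac{d}{dx}\frac{f_{1}}{f_{0}} \;=\; \sum_{k=0}^{n-1}\bigl(c_{k+1}\gamma_{n,k+1}+d_{k}\gamma_{n,k}\bigr)\,q_{n-1,k},
\end{equation*}
and similarly $0=\frac{d}{dx}(f_0/f_0)=\sum_{k=0}^{n-1}(c_{k+1}\beta_{n,k+1}+d_{k}\beta_{n,k})\,q_{n-1,k}$. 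Since $\{q_{n-1,k}\}$ is a basis, the latter forces the relation $d_k = -c_{k+1}\beta_{n,k+1}/\beta_{n,k}$, and plugging this into the former and comparing with (\ref{eqA}) yields the key identity
\begin{equation*}
w_{k} \;=\; c_{k+1}\,\beta_{n,k+1}\left(\frac{\gamma_{n,k+1}}{\beta_{n,k+1}}-\frac{\gamma_{n,k}}{\beta_{n,k}}\right).
\end{equation*}
By Lemma \ref{LemA}, $c_{k+1}>0$, and $\beta_{n,k+1}>0$ by hypothesis, so the sign (or strict positivity) of $w_k$ exactly matches the (strict) monotonicity of $\gamma_{n,k}/\beta_{n,k}$, giving (b) $\Leftrightarrow$ (c).

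I expect the main technical obstacle to be purely bookkeeping: carefully expanding $(d/dx)(f_j/f_0)$ using (\ref{eqPREC}), with the boundary conventions $c_0=0$, $d_n=0$, $q_{n-1,-1}=q_{n-1,n}=0$, and shifting indices so the two sums combine into a single sum over $k=0,\dots,n-1$. Once the identity above is extracted, both equivalences are transparent; the only conceptual subtlety is the observation that the endpoint values $\gamma_{n,0}/\beta_{n,0}$ and $\gamma_{n,n}/\beta_{n,n}$ coincide with $(f_1/f_0)(a)$ and $(f_1/f_0)(b)$, which is what lets (b) imply actual existence rather than merely formal monotonicity.
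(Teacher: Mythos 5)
Your proposal is correct and follows essentially the same route as the paper: both arguments rest on Proposition \ref{PropABL} and Lemma \ref{LemA}, telescope the two sums in (\ref{eqPREC}) into a single expansion of $\frac{d}{dx}(f_1/f_0)$ in the basis $q_{n-1,k}$, and reduce everything to the identity $w_k = c_{k+1}\beta_{n,k+1}\bigl(\tfrac{\gamma_{n,k+1}}{\beta_{n,k+1}}-\tfrac{\gamma_{n,k}}{\beta_{n,k}}\bigr)$ together with the endpoint observation $\tfrac{f_1(a)}{f_0(a)}=\tfrac{\gamma_{n,0}}{\beta_{n,0}}$, $\tfrac{f_1(b)}{f_0(b)}=\tfrac{\gamma_{n,n}}{\beta_{n,n}}$. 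The only (cosmetic) difference is that you eliminate $d_k$ via $\frac{d}{dx}(f_0/f_0)=0$, whereas the paper instead subtracts $\tfrac{\gamma_{n,k_0}}{\beta_{n,k_0}}f_0$ from $f_1$ so that the $d_{k_0}$ term drops out; both yield the same relation.
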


\begin{proof}
We begin  with some technical preliminaries, under
the assumption that for $k=0, \dots , n$, the coordinates $\beta_{n, k}$ of $f_0$   
are strictly positive.  Let $k_{0}\in\left\{
0, \dots ,n-1\right\}  .$ Since $p_{n,k},k=0,...,n$, is a basis, there exists
numbers $\delta_{1},...,\delta_{n}$ such that
\begin{equation}
\psi_{k_{0}}:=f_{1}-\frac{\gamma_{n,k_0}}{\beta_{n,k_0}}f_{0}=\sum_{k=0}
^{n}\delta_{k}p_{n,k}. 
\label{eqfneu}
\end{equation}
From (\ref{eqeq}) we get
\begin{equation*}
\delta_{k}=\gamma_{n,k}-\frac{\gamma_{n,k_0}}{\beta_{n,k_0}}\beta_{n, k}
\end{equation*}
for $k=0,...,n.$ Setting  $k=k_{0}$ we obtain
\begin{equation}
\label{berob}
\delta_{k_{0}}=\gamma_{n,k_0}-\frac{\gamma_{n,k_0}}{\beta_{n,k_0}}
\beta_{n, k_{0}}=0.
\end{equation}
Let us write
 $$
 \frac{f_{1}}{f_{0}}-\frac{\gamma_{n,k_0}}{\beta_{n,k_0}}
=
 \frac{\psi_{k_{0}}}{f_{0}}
=
\sum_{k=1}^{n}\delta_{k}\frac{p_{n,k}}{f_{0}}.
$$
Differentiating  we get
\[
\frac{d}{dx}\frac{f_{1}}{f_{0}}=\sum_{k=0}^{n}\delta_{k}\frac{d}{dx}\left(
\frac{p_{n,k}}{f_{0}}\right)  .
\]
Proposition \ref{PropABL} together with Lemma \ref{LemA} show that
\[
\frac{d}{dx}\frac{f_{1}}{f_{0}}=\sum_{k=0}^{n}\delta_{k}\left[
c_{k}q_{n-1,k-1}+d_{k}q_{n-1,k}\right],
\]
where $c_0 = 0$ and $c_k > 0$ for $k=0, \dots , n$, while
$d_k < 0$ for $k=0, \dots , n-1$  and $d_n = 0$.
Thus,
\[
\frac{d}{dx}\frac{f_{1}}{f_{0}}=   
\delta_{0} d_{0}q_{n-1,0}
+
\sum_{k=1}^{n-1}\delta_{k}\left[
c_{k}q_{n-1,k-1}+d_{k}q_{n-1,k}\right]  +c_{n}\delta_{n}q_{n-1,n-1}
\]
\[
=   
\sum_{k=1}^{n}\delta_{k} 
c_{k} q_{n-1,k-1}
+
\sum_{k=0}^{n-1}\delta_{k} d_{k}q_{n-1,k} 
=
\sum_{k=0}^{n-1}\left(  \delta_{k+1}c_{k+1}+\delta_{k}d_{k}\right)  q_{n-1,k}.
\]
Using (\ref{eqA}) we conclude that 
\begin{equation}
c_{k+1}\delta_{k+1}=w_{k}-\delta_{k}d_{k} 
\label{eqck}
\end{equation}
for $k= 0 ,\dots, n-1$. Inserting $k=k_{0}$ in (\ref{eqck}), from (\ref{berob}) we get
\begin{equation}
c_{k_{0}+1}\delta_{k_{0}+1}=w_{k_{0}}-\delta_{k_{0}}d_{k_{0}}=w_{k_{0}}
\label{eqnew11}
\end{equation}
whenever $k_{0}\in\left\{
0, \dots ,n-1\right\}  .$
Now the result is easily obtained from this equality.  We mention only the non-decreasing case,
since the strictly increasing one is handled in an identical manner.

 First we prove that a) and b) are 
equivalent. It follows from (\ref{eqeq}) that 
$\frac{f_1 (a) }{f_0 (a) } = \frac{\gamma_{n,0}}{\beta_{n, 0}}$
and $\frac{f_1 (b) }{f_0 (b) } = \frac{\gamma_{n,n}}{\beta_{n, n}}$.
 By (\ref{nodes}),  the nodes are non-decreasing (and hence they belong to $[a,b]$)  
 if and only if  so
are the numbers $\frac{\gamma_{n,k}}{\beta_{n, k}}$.

Regarding the equivalence between b) and c),  by (\ref{eqnew11}) we have 
$w_{k_{0}}\geq 0$ if and only if $\delta_{k_{0}+1}=\gamma_{k_{0}+1}-\frac{\gamma
_{k_{0}}}{\beta_{n,k_0}}\beta_{k_{0}+1}\geq0$, which is clearly equivalent to 
$\frac{\gamma_{n,k_0}}{\beta_{n,k_0}}\leq\frac{\gamma_{k_{0}+1}}{\beta
_{k_{0}+1}}$.
\end{proof}

\begin{corollary}\label{antimaz}
Let $f_{0}>0$,  let $f_{1}/f_{0}$ be strictly increasing on $\left[  a,b\right]
$, and let   $f_{0},f_{1}\in U_{n}\subset C^n([a,b], \mathbb{K})$. Suppose that $U_{n}$ 
has a non-negative Bernstein basis, and that $D_{f_{0}}U_{n}$ possesses a positive Bernstein basis. If there exists a generalized Bernstein operator 
$B_{n}:C\left[ a,b\right]
\rightarrow  U_n$
with non-decreasing nodes (resp. strictly increasing nodes), fixing $f_{0}$ and $f_{1}$, then
\[
\left(  \frac{f_{1}}{f_{0}}\right)  ^{\prime}\left(  x\right)  >0 \text{ for
all }x\in\left(  a,b\right) (\text{resp. for
all }x\in\left[ a,b\right]) .
\]
\end{corollary}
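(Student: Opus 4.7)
The plan is to derive the corollary as a direct consequence of Theorem \ref{ThmBern2}, so the main work is checking that the theorem's hypotheses are met and then extracting a pointwise strict inequality from the resulting expansion of $(f_1/f_0)'$ in the basis $q_{n-1,k}$.

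First I would verify the positivity hypothesis on the coordinates $\beta_{n,k}$ of $f_0$ that is required to apply Theorem \ref{ThmBern2}. Since the generalized Bernstein operator $B_n$ exists, identity (\ref{bases}) yields $\beta_{n,k} = f_0(t_{n,k})\,\alpha_{n,k}$, which is strictly positive because $f_0>0$ on $[a,b]$ and $\alpha_{n,k}>0$ by definition. With all hypotheses of Theorem \ref{ThmBern2} in place, the existence of $B_n$ with non-decreasing (resp.\ strictly increasing) nodes is exactly statement a), so statement c) also holds: the coefficients $w_k$ in $\frac{d}{dx}\frac{f_1}{f_0}=\sum_{k=0}^{n-1}w_k q_{n-1,k}$ are non-negative (resp.\ strictly positive).

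Next I would exploit that $\{q_{n-1,k}\}$ is assumed to be a \emph{positive} Bernstein basis, which is stronger than what Theorem \ref{ThmBern2} requires. Thus $q_{n-1,k}(x)>0$ for every $x\in(a,b)$ and every $k$. In the non-decreasing case, $w_k\ge 0$, and one only needs at least one $w_{k_0}>0$ to conclude $(f_1/f_0)'(x)>0$ on $(a,b)$. This follows because $f_1/f_0$ is strictly increasing on $[a,b]$, hence non-constant, so $(f_1/f_0)'\not\equiv 0$, and by linear independence of the $q_{n-1,k}$ not all $w_k$ can vanish. This handles the first half of the conclusion.

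For the strictly increasing case, $w_k>0$ for all $k$, so positivity on $(a,b)$ is automatic. The remaining task, which is the only mildly delicate step, is to check the endpoints. Since $q_{n-1,0}$ has a zero of order $0$ at $a$ and a zero of order $n-1$ at $b$, it does not vanish at $a$; being non-negative on $[a,b]$ and positive on $(a,b)$, continuity forces $q_{n-1,0}(a)>0$. Meanwhile $q_{n-1,k}(a)=0$ for $k\ge 1$, so $(f_1/f_0)'(a) = w_0\,q_{n-1,0}(a) > 0$. The symmetric argument at $b$, using $q_{n-1,n-1}(b)>0$, gives $(f_1/f_0)'(b)>0$. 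This completes the strictly increasing case. I do not foresee any significant obstacle beyond correctly invoking the boundary behavior of the $q_{n-1,k}$ furnished by the positive Bernstein basis hypothesis.
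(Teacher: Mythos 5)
Your proof is correct and follows essentially the same route as the paper: verify $\beta_{n,k}>0$ from (\ref{bases}), invoke the implication a) $\Rightarrow$ c) of Theorem \ref{ThmBern2}, and conclude from positivity of the $q_{n-1,k}$ on $(a,b)$. You in fact go slightly further than the paper, whose proof treats only the non-decreasing case explicitly; your endpoint argument ($q_{n-1,0}(a)>0$ while $q_{n-1,k}(a)=0$ for $k\ge 1$, hence $(f_1/f_0)'(a)=w_0\,q_{n-1,0}(a)>0$, and symmetrically at $b$) correctly supplies the detail needed for the strictly increasing conclusion on the closed interval.
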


\begin{proof} We consider the non-decreasing case.
Recall that if $B_n$ exists, by hypothesis $f_0 > 0$ and  $\alpha_{n,k} > 0$,
so $\beta_{n,k} = f_0 (t_{n,k}) \ \alpha_{n,k} \ > 0$. 
Thus, we can use the implication a) $\implies$  c) 
from the preceding Theorem. Writing  
\[
\frac{d}{dx}\frac{f_{1}}{f_{0}}=\sum_{k=0}^{n-1}w_{k}q_{n-1,k},
\]
we have  $w_{k} \ge  0$ for $k = 0, \dots , n-1$.
Since $f_{1}/f_{0}$ is strictly increasing, it is non-constant, so 
$\frac{d}{dx}\frac{f_{1}}{f_{0}}\neq0$ and  thus some  coefficient $w_{j}$ is strictly
positive.   But now 
$$
\frac{d}{dx}\frac{f_{1}}{f_{0}} \ge w_j q_{n-1,j} > 0
$$  on $\left(  a,b\right)$.
\end{proof}

\begin{example} The condition $
\left(  \frac{f_{1}}{f_{0}}\right) ^{\prime}  >0$ on $\left[ a,b\right]$
 is not sufficient to ensure non-decreasing nodes, as \cite[Example 4.1]{AKR08b}
 shows: consider $\mathbb{P}_3[0,1]$ with the standard Bernstein basis, 
 let $f_0 = \mathbf{1}$, and let $f_1 (x) = 3 x /8 - x^2/2 + x^3/3$.
 Then $f_1^\prime (x) := (x - 1/2)^2 + 1/8
 	= 3 p_{2,0} (x) /8 -  p_{2,1} (x) /8 +  3 p_{2,2} (x) /8$, so by Theorem \ref{ThmBern2} 
	no generalized Bernstein operator $B_3 :  C([a,b], \mathbb{K}) \to \mathbb{P}_3[a,b]$, fixing $\mathbf{1}$ and $f_1$,
	can be defined via a non-decreasing sequence of nodes. 
	
	\end{example}

\begin{example} To finish, we revisit Example \ref{ex1}  under the
	light of the preceding results. A practical  advantage of applying,
	for instance, Corollary \ref{antimaz}, 
	is that one does not need to determine whether or not  $U_{n}$ 
	has a non-negative Bernstein basis. If it does not,
	a generalized Bernstein operator does not exist, and nothing
	else needs to be done. So it is enough to check that  $D_{f_{0}}U_{n}$ possesses a positive Bernstein basis, a task a priori simpler, since
	$D_{f_{0}}U_{n}$  has  one fewer dimension.
	
	 As before, we can find
	a positive  Bernstein basis of 
	$$
	D_{\mathbf{1}} \mathbb{E}_4 = 
	\operatorname{Span}\{ 1, x, x^2,  x^{5}\}
	$$ 
over $[a,b] =[-1,1]$, by writing  arbitrary linear combinations of the functions
$\{ 1, x, x^2, x^5\}$, and imposing the conditions of having precisely 3 zeros,
$k$  of them at $-1$ and the other
$3-k$ at $1$. Multiplying by $-1$ if needed, these basis functions can
be assumed to be non-negative at 0 (we shall see that they  are 
actually positive
inside $(-1, 1)$). In this way we find 
the following  Bernstein basis $\{p_{3,0}, \dots,p_{3,3}\}$
on $\left[ -1,1\right] $:
\begin{eqnarray*}
	p_{3,0}\left( x\right) &=& 1 - \frac{5}{2} x + \frac{5}{3}  x^2  - \frac{1}{6} 
	x^5, \\
	p_{3,1}\left( x\right) &=& 1 - \frac{1}{2} x -  x^2  + \frac{1}{2} 
	x^5, \\
	p_{3,2}\left( x\right) &=& 1 +  \frac{1}{2} x -   x^2  - \frac{1}{2} x^5, \\
	p_{3,3}\left( x\right) &=& 1 + \frac{5}{2} x + \frac{5}{3}  x^2  + \frac{1}{6} 
	x^5.
\end{eqnarray*}
These functions  have the 
correct number of
zeros at the endpoints. To see
that they form a positive basis, since $p_{3,k}(0) = 1 > 0$ for
$k=0,\dots, 3$, 
it suffices to show that they have no additional zeros inside
$(-1,1)$, which follows by factoring all the corresponding linear terms $(x +1)$ and
$(x - 1)$. In  each case we are left
with a second degree polynomial having no real roots. So  we are within the 
realm of
Theorem \ref{ThmBern2} or Corollary \ref{antimaz}. Since the derivative
of $f_1 (x) = x^3$ vanishes at 0, we conclude that 
	no generalized Bernstein operator $B_3 :  C([-1,1], \mathbb{K}) \to \mathbb{E}_4 [-1,1]$, fixing $\mathbf{1}$ and $f_1$,
can be defined via a non-decreasing sequence of nodes.

 Observe however 
that this result  is less informative than   Example \ref{ex1}, 
since it does not tell us whether a generalized Bernstein operator
can be defined, by dropping the requirement that nodes be non-decreasing.
\end{example}

\end{document}